\documentclass[12pt]{amsart}

\usepackage{a4,amsfonts,amsmath,amssymb}
\usepackage[latin1]{inputenc}
\usepackage{graphicx}

\headheight=8pt
\topmargin=30pt

\DeclareMathOperator{\myspan}{span}
\DeclareMathOperator{\ord}{ord}
\DeclareMathOperator{\T}{\mathcal{T}}   
\DeclareMathOperator{\V}{\mathcal{V}}   
\DeclareMathOperator{\trop}{trop}       
\DeclareMathOperator{\Res}{Res}         
\DeclareMathOperator{\codim}{codim}         
\DeclareMathOperator{\new}{New} 

\subjclass[2000]{13P10, 14Q99}
\keywords{Tropical geometry, tropical variety, tropical basis, Bieri-Groves-Theorem.}

\begin{document}
\newtheorem{defi}{Definition}[section]
\newtheorem{satz}[defi]{Proposition}
\newtheorem{prop}[defi]{Proposition}
\newtheorem{lemma}[defi]{Lemma}
\newtheorem{kor}[defi]{Corollary}
\newtheorem{bem}[defi]{Quotation}
\newtheorem{theo}[defi]{Theorem}
\newtheorem{bsp}[defi]{Example}
\newtheorem{defisatz}[defi]{Definition and proposition}
\newcommand{\ZZ}{\mathbb{Z}}
\newcommand{\Z}{\mathbb{Z}}
\newcommand{\RR}{\mathbb{R}}
\newcommand{\NN}{\mathbb{N}}
\newcommand{\XX}{\mathcal{X}}
\newcommand{\beweis}{{\bfseries{Proof}\hspace{5mm}}}
\newcommand{\note}{{\bfseries{Notation}\hspace{5mm}}}
\newcommand{\atopfrac}[2]{\genfrac{}{}{0pt}{}{#1}{#2}}

\newenvironment{definition}{\begin{defi}\rm}{\end{defi}}

\newcommand{\CC}{\mathbb{C}}
\newcommand{\QQ}{\mathbb{Q}}
\newcommand{\kk}{\mathbb{K}}
\newcommand{\SSS}{\mathbb{S}}
\newcommand{\BB}{\mathbb{B}}
\newcommand{\R}{\mathbb{R}}

\title{Tropical Bases by regular projections}
\author{Kerstin Hept and Thorsten Theobald}
\address{FB 12 -- Institut f\"ur Mathematik,
  J.W.\ Goethe-Universit\"at, 
  Postfach 111932, D-60054 Frankfurt am Main, Germany} 
\email{\{hept,theobald\}@math.uni-frankfurt.de}
\date{}

\begin{abstract}
We consider the tropical variety $\T(I)$ of a prime ideal
$I$ generated by the polynomials
$f_1, \ldots, f_r$ and
revisit the regular projection technique introduced
by Bieri and Groves from a computational point
of view. In particular, we show that $I$
has a short tropical basis of cardinality at most
$r + \codim I + 1$ at the price of increased degrees,
and we provide a computational
description of these bases.
\end{abstract}

\maketitle

\section{Introduction}

Given a field $K$ endowed with a non-trivial real valuation 
$\ord:K \to \R_{\infty} := \R \cup \{ \infty \}$, the valuation extends to any
fixed algebraic closure $\bar{K}$.
The \emph{tropical variety} $\T(I)$ 
of an ideal $I \lhd K[x_1, \ldots, x_n]$
is defined as the topological closure of the set
\begin{equation}
 \label{eq:deftropicalvariety1}
 \ord \V(I) \ = \ \{ (\ord(z_1), \ldots, \ord(z_n)) \, : z \in \V(I) \} \ \subseteq \ \R^n \, ,
\end{equation}
where $\V(I)$ denotes the zero set of $I$ in $(\bar{K}^*)^n$.
Tropical varieties have been the subject of intensive
recent studies 
(\cite{bjsst,ekl,jensen-diss,jmm,spst2004}; 
see \cite{rgst} for a general introduction.)

A basis $\mathcal{F} = \{f_1, \ldots, f_r\}$ of $I$ is called a 
\emph{tropical basis} of $I$ if $\bigcap_{i=1}^r \mathcal{T}(f_i) = \mathcal{T}(I)$.
Bogart, Jensen, Speyer, Sturmfels, and Thomas initiated the systematic
computational 
investigation of tropical bases \cite{bjsst,jensen-diss}, by providing both Gr\"obner-related
techniques for computing tropical
bases as well as by providing lower bounds on the size. They consider
the field of Puiseux series $K = \CC\{\{t\}\}$ with the natural valuation and
concentrate on the ``constant coefficient case'', i.e., $I \lhd \CC[x_1, \ldots, x_n]$.
As a lower bound, they show that for $1\leq d\leq n$ 
there is a $d$-dimensional linear ideal $I$ in $\CC[x_1,\ldots,x_n]$ such that any 
tropical basis of linear forms in $I$ has size 
at least $\frac{1}{n-d+1}\binom{n}{d}$.

In this note we explain that by dropping the assumption on the degree of the
polynomials there always exists a small tropical basis for a prime ideal $I$, 
thus contrasting that lower bound.

\begin{theo}
\label{theo:tropbasis}
Let $I \lhd K[x_1,\ldots,x_n]$ be a prime ideal generated by 
the polynomials $f_1, \ldots, f_r$.
Then there exist $g_0,\ldots,g_{n-\dim I } \in I$ with
\begin{equation}
  \label{eq:titgi}
  \T(I) \ = \ \bigcap_{i=0}^{n-\dim I}\T(g_i)
\end{equation}
and thus $\mathcal{G} := \{f_1, \ldots, f_r, g_0, \ldots, g_{n - \dim I}\}$
is a tropical basis for $I$ of cardinality $r+\codim I+1$.
\end{theo}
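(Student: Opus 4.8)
The plan is to prove the displayed identity \eqref{eq:titgi} directly; the remaining assertions then follow formally. Each $f_i$ lies in $I$, so $\ord\V(I)\subseteq\ord\V(f_i)\subseteq\T(f_i)$, and since $\T(f_i)$ is closed we get $\T(f_i)\supseteq\T(I)$; hence if $g_0,\dots,g_{n-\dim I}$ satisfy \eqref{eq:titgi}, then adjoining $f_1,\dots,f_r$ does not shrink the intersection below $\T(I)$, while $\{f_1,\dots,f_r\}\subseteq\mathcal G$ ensures $\mathcal G$ generates $I$. Its cardinality is $r+(n-\dim I)+1=r+\codim I+1$. Throughout put $d:=\dim I$ and $c:=\codim I=n-d$.

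The construction of the $g_i$ rests on the regular projection technique. By the Bieri--Groves theorem, $\T(I)$ is a finite rational polyhedral complex, pure of dimension $d$. For an integer matrix $A\in\Z^{(d+1)\times n}$ of rank $d+1$, let $\varphi_A\colon(\bar K^*)^n\to(\bar K^*)^{d+1}$ be the associated monomial map; tropicalization is equivariant for monomial maps, so the pullback along $\varphi_A$ of a polynomial $h\in K[y_1,\dots,y_{d+1}]$ has tropical hypersurface $A^{-1}(\T(h))$ in $\R^n$. For sufficiently general $A$ the Zariski closure of $\varphi_A(\V(I))$ is a hypersurface in $(\bar K^*)^{d+1}$; its prime ideal is the elimination ideal $J_A=I\cap K[y_1,\dots,y_{d+1}]$, which is necessarily principal, say $J_A=(h_A)$, and -- this is the \emph{regularity} of the projection -- $A(\T(I))=\T(h_A)$. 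Pulling $h_A$ back along $\varphi_A$ and clearing denominators by a monomial (which does not change the tropical hypersurface on the torus) yields $g_A\in J_A\subseteq I$ with
\begin{equation*}
  \T(g_A)\;=\;A^{-1}\!\bigl(A(\T(I))\bigr)\;=\;\T(I)+\ker A,
\end{equation*}
a set of dimension at most $n-1$ containing $\T(I)$, where $\dim\ker A=c-1$. Thus each such $g_A$ contributes one tropical hypersurface containing $\T(I)$ whose ``error'' over $\T(I)$ is exactly the $(c-1)$-dimensional family of translates $\T(I)+\ker A$.

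It then remains to choose $c+1$ sufficiently general matrices $A_0,\dots,A_c$ so that, with $L_i:=\ker A_i$ and $g_i:=g_{A_i}$, one has $\bigcap_{i=0}^{c}(\T(I)+L_i)=\T(I)$. The inclusion ``$\supseteq$'' is clear. For the reverse inclusion, write $\T(I)=\bigcup_{j=1}^m P_j$ as the union of its $d$-dimensional cells and expand
\[
  \bigcap_{i=0}^{c}\bigl(\T(I)+L_i\bigr)\;=\;\bigcup_{(j_0,\dots,j_c)}\ \bigcap_{i=0}^{c}\bigl(P_{j_i}+L_i\bigr).
\]
For the diagonal tuples $j_0=\dots=j_c=j$ one verifies, using that a general $(c-1)$-dimensional subspace meets the $d$-dimensional affine hull of $P_j$ (and the $(d+1)$-dimensional subspaces through it) only in the expected dimension, that $\bigcap_{i=0}^{c}(P_j+L_i)=P_j$; so the diagonal tuples reproduce exactly $\T(I)$. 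For an off-diagonal tuple each of the $c+1$ sets $P_{j_i}+L_i$ has dimension at most $d+(c-1)=n-1$, so for general $L_i$ their intersection has dimension at most $(c+1)(n-1)-c\,n=d-1$; note that it is precisely the ``$+1$'' that pushes this strictly below $\dim\T(I)=d$ (with only $c$ projections the analogous bound is $d$, and genuine $d$-dimensional spurious components do occur). A further general-position argument -- exploiting that the $P_{j_i}$ are honest polyhedra, bounded in their affine hulls modulo their at most $d$-dimensional recession cones, and that the recession fan of $\T(I)$ is again $d$-dimensional -- should show that for sufficiently general $A_i$ all of these finitely many off-diagonal terms are empty. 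I expect this last point -- upgrading the dimension bound on the off-diagonal terms to outright vanishing for a generic choice of the $A_i$ -- to be the main obstacle, together with the regularity statement $A(\T(I))=\T(h_A)$, i.e.\ the fact that tropicalization commutes with a sufficiently general monomial projection onto a hypersurface.
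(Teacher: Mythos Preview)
Your overall architecture coincides with the paper's: realize each $\pi_i^{-1}(\pi_i(\T(I)))=\T(I)+\ker\pi_i$ as a single tropical hypersurface $\T(g_i)$ with $g_i\in I$, and then intersect over $c+1=\codim I+1$ well-chosen projections. But the two steps you yourself flag as ``main obstacles'' are precisely the substantive content, and neither is established in your write-up; the paper handles them rather differently from what you sketch.

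For the regularity claim $A(\T(I))=\T(h_A)$ (equivalently $\pi^{-1}\pi(\T(I))=\T(g_A)$), the paper gives an explicit elimination-theoretic argument rather than appealing to a general principle that tropicalization commutes with generic monomial projections. One introduces auxiliary variables $\lambda_1,\dots,\lambda_{c-1}$, forms the ideal $J\lhd K[x,\lambda]$ generated by all $f(x_1\prod_j\lambda_j^{u_1^{(j)}},\dots,x_n\prod_j\lambda_j^{u_n^{(j)}})$ with $f\in I$ and $u^{(j)}$ spanning $\ker\pi$, and proves $\pi^{-1}\pi(\T(I))=\T(J\cap K[x])$ for an \emph{algebraically regular} projection (one for which the successive elimination ideals in the $\lambda_j$ admit generators with monomial leading coefficients). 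The nontrivial inclusion uses the classical Extension Theorem to lift roots from $\V(J\cap K[x])$ to $\V(J)$; your monomial-map phrasing suppresses exactly this lifting step. That $g_i\in I$ then follows from the separate observation $J\cap K[x]\subseteq I$ (specialize all $\lambda_j=1$).

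For the identity $\bigcap_{i=0}^{c}(\T(I)+L_i)=\T(I)$, the paper does not attempt an ad hoc combinatorial proof but invokes the Bieri--Groves projection theorem directly. Your dimension count has a further defect beyond the one you already note: the off-diagonal terms $\bigcap_i(P_{j_i}+L_i)$ are \emph{not} generically empty. Whenever several of the $P_{j_i}$ share a face---as adjacent maximal cells of a connected polyhedral complex inevitably do---that face lies in the corresponding off-diagonal intersection for every choice of the $L_i$. So the correct target is containment in $\T(I)$, not emptiness, and a bare expected-dimension bound of $d-1$ does not deliver that containment. The Bieri--Groves argument proceeds instead through \emph{geometrically regular} projections (dimension-preserving on cells and cell-separating: $\pi(\sigma)\subseteq\pi(\tau)\Rightarrow\sigma\subseteq\tau$), together with the fact that $\T(I)$ sits inside a finite union of $d$-dimensional affine subspaces; it is not a naive transversality count on the cells $P_j$ themselves.
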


In particular, this also implies the universal (i.e., independent of $\dim I$) 
bound of $n+1$ polynomials in the representation~\eqref{eq:titgi}.

The statement comes as a consequence of the regular projection technique introduced by Bieri 
and Groves \cite{bg}. The purpose of this note is to revisit 
this approach from the computational
point of view, with the goal to 
provide an explicit and constructive description 
of the resulting tropical bases. Specifically, we
apply tropical elimination on a particular class of
ideals; for a general treatment of tropical elimination
see the recent papers of Sturmfels, Tevelev, and Yu
\cite{sturmfels-tevelev-2007,sturmfels-yu-2007}.

Based on this construction, we characterize the Newton polytopes 
of the polynomials $g_i$ in the tropical bases for the special
case of ideals generated by two linear polynomials.
The tradeoff 
between the cardinality and the degree of tropical bases in
the general case is subject to further study.

We remark that Theorem~\ref{theo:tropbasis} can
be seen as a tropical analogue to the Eisenbud-Evans-Theorem
from classical algebraic geometry, which states that every algebraic
set in $n$-space is the intersection 
of $n$ hypersurfaces \cite{eisenbud-evans-73}.

This paper is structured as follows. In Section~\ref{se:prelim} 
we introduce the relevant 
notation from tropical geometry and their relation to valuations. 
In Section~\ref{se:projections} we provide the computational treatment
of regular projections and prove Theorem~\ref{theo:tropbasis}.
Section~\ref{se:newtonpolytopes} provides some results on the 
characterization of the resulting Newton polytopes
of the basis polynomials.

\medskip

\subsection*{Acknowledgments.} We thank Robert Bieri, Tristram Bogart, Jan Draisma, and Bernd
Sturmfels for useful comments.

\section{Tropical geometry\label{se:prelim}}

For a field $K$, a real valuation is a map
$\ord:K\rightarrow \RR_{\infty} =\RR\cup \{ \infty\}$ with
$K\setminus \{0\}\rightarrow \RR$ and $0\mapsto\infty$ such that
$\ord(ab)=\ord(a)+\ord(b)$ and $\ord(a+b)\geq \min\{\ord(a),\ord(b)\}$.
Thus $\ord =-\log||\cdot||$ for a non-archimedean norm $||\cdot||$ on~$K$.
Examples include $K=\QQ$ with the $p$-adic valuation or the field
$K=\CC\{\{t\}\}$ of Puiseux series with the natural valuation.
We can extend the valuation map to $\bar{K}$ (cf.\ \cite{ekl}) and to $\bar{K}^n$ via
\[ \ord \, : \, \bar{K}^n \ \rightarrow \ \RR_{\infty}^n, \quad (a_1,\ldots, a_n) \ \mapsto\ (\ord(a_1),\ldots,\ord(a_n)) \, .
\]
We always assume that $\ord$ is non-trivial, i.e., $\ord (\bar{K}^*) \neq \{0\}$. Then 
the image $\ord(\bar{K}^*)$ is dense in $\R$.

Let $f=\sum_{\alpha} c_\alpha x^\alpha$ be a polynomial in 
$K[x_1,\ldots, x_n]$. The \emph{tropicalization of} $f$ is defined as
\[ \trop(f) \ = \ \min_\alpha\{\ord(c_\alpha)+\alpha_1x_1+\cdots+\alpha_nx_n\} \, , \]
and the \emph{tropical hypersurface of} $f$ is 
\[\T(f) \ = \ \{w\in\RR^n \, : \, \mbox{the minimum in $\trop(f)$ is attained at least twice in $w$}\} \, .\]
For an ideal $I \lhd K[x_1, \ldots, x_n]$,
the \emph{tropical variety of} $I$ can be defined either by
\[\T(I) \ = \ \bigcap_{f\in I}\T(f)\]
or equivalently by~\eqref{eq:deftropicalvariety1}; see \cite{ekl}.

We shortly review the link between tropical geometry and
classical valuation theory.
For a prime ideal $I$, let $A  :=  K[x_1,\ldots,x_n]/I$ be its coordinate ring.
It is well known (see, e.g., \cite{endler}) that
each valuation on $K$ can be extended to a valuation on $A$.
Let $\Delta_A^{\ord}$ be defined by
\[\Delta_A^{\ord} \ = \ \{(w(x_1),\ldots,w(x_n)) \in \R^n \mid w: A \rightarrow \RR_{\infty} \mbox{ a valuation with } w|_K=\ord\} \, . 
\]
This subset of $\R^n$ coincides with the tropical variety of $I$,
\[\Delta_A^{\ord} \ = \ \T(I)\]
(see \cite{ekl}).
Bieri and Groves \cite{bg} showed that
$\Delta_A^{\ord}$ (and thus $\T(I)$ as well)
is a pure polyhedral complex of dimension equal to the transcendence degree of 
$A$ over $K$, and rationally defined over the value group $\ord(K^*)$ of $\ord$.

\section{Projections and the main theorem\label{se:projections}}

Let $I \lhd K[x_1, \ldots, x_n]$ be an $m$-dimensional prime ideal.
The main geometric idea is to consider $n-m+1$ different (rational)
projections
$\pi_0, \ldots, \pi_{n-m} : \R^n \to \R^{m+1}$. If these projections are
sufficiently generic (as specified below) then we obtain
\[
  \bigcap_{i=0}^{n-m} \pi_i^{-1}(\pi_i(\T(I))) \ = \ \T(I) \, ,
\]
and each of the sets $\pi_i^{-1}(\pi_i(\T(I)))$ is a tropical hypersurface.

First 
we consider the image of the tropical variety $\T(I)$ under a single
(rational) projection 
\begin{eqnarray*}
  \pi \, : \, \RR^n & \to & \RR^{m+1} \, , \\
   x & \mapsto  & Ax
\end{eqnarray*}
with a regular rational matrix $A$ whose rows are denoted by
$a^{(1)}, \ldots, a^{(m+1)}$.
Let $u^{(1)}, \ldots, $ $ u^{(l)} \in \QQ^n$ with $l := n-(m+1)$
be a basis of the orthogonal complement
of $\myspan \{a^{(1)}, \ldots, $ $a^{(m+1)}\}$.

Set $R = K[x_1, \ldots, x_n, \lambda_1, \ldots, \lambda_l]$, and define
the ideal $J \lhd R$ by  
\begin{eqnarray*}
 J & = & \big\langle 
 g \in R \, : \, 
 g = f(x_1 \prod_{j=1}^l {\lambda_j}^{u_1^{(j)}}, \ldots,
       x_n \prod_{j=1}^l {\lambda_j}^{u_n^{(j)}}) \text{ for some } f \in I \big\rangle \, .
\end{eqnarray*}

We show the following characterization of $\pi^{-1}(\pi(\T(I)))$ 
in terms of elimination.

\begin{theo}
\label{th:piinvpi}
Let $I \lhd K[x_1, \ldots, x_n]$ be an $m$-dimensional prime ideal and
$\pi : \RR^n \to \RR^{m+1}$ be a rational projection.
Then $\pi^{-1}(\pi(\T(I)))$ is a tropical variety with
\begin{equation}
\label{eq:piinversepi}
  \pi^{-1}(\pi(\T(I))) \ = \  \T(J \cap K[x_1, \ldots, x_n]) \, .
\end{equation}
\end{theo}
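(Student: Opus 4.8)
The plan is to understand the substitution $x_i \mapsto x_i \prod_j \lambda_j^{u_i^{(j)}}$ as a monomial map of tori, and to track what it does on the tropical side via the fundamental theorem of tropical geometry ($\T(I) = \overline{\ord\,\V(I)}$). First I would set up the torus morphism $\phi : (\bar K^*)^{n+l} \to (\bar K^*)^n$, $(x,\lambda) \mapsto (x_1 \prod_j \lambda_j^{u_i^{(j)}})_i$, so that $J$ is (the ideal generated by) $\phi^*(I)$, and hence $\V(J) = \phi^{-1}(\V(I))$. Applying $\ord$ and taking closures, I would argue that $\T(J) = L^{-1}(\T(I))$, where $L : \R^{n+l} \to \R^n$ is the linear map with matrix $[\,\mathrm{Id}_n \mid U\,]$, $U$ being the $n \times l$ matrix with columns $u^{(j)}$; the key input here is that $\ord$ is surjective onto a dense subgroup, so the $\lambda$-coordinates range over a dense subset of $\R^l$ and the image behaves well under closure. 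This is the standard behavior of tropicalization under monomial maps, but I would want to state it carefully because $\phi$ is not an isomorphism onto its image (it has positive-dimensional fibers, coming from the torus $\{x = 1, \lambda \text{ arbitrary}\}$ scaled appropriately).

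Next I would handle the elimination step. Classically, $\V(J \cap K[x_1,\dots,x_n])$ is the Zariski closure of the projection of $\V(J)$ onto the $x$-coordinates; since we are working inside tori and $I$ is prime, $J$ is prime (as $\phi^*$ is injective on the function field, $I$ being prime), and $J \cap K[x]$ is prime as well. I would invoke the tropical elimination principle — here I can cite \cite{sturmfels-tevelev-2007, sturmfels-yu-2007}, or prove the needed special case directly using $\T(\cdot) = \overline{\ord\,\V(\cdot)}$ — to conclude $\T(J \cap K[x]) = \overline{p(\T(J))}$ where $p : \R^{n+l} \to \R^n$ is the coordinate projection forgetting the $\lambda$'s. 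Composing with the previous step gives $\T(J \cap K[x]) = \overline{p(L^{-1}(\T(I)))}$.

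It then remains to identify $\overline{p(L^{-1}(\T(I)))}$ with $\pi^{-1}(\pi(\T(I)))$, and this is essentially a linear-algebra computation. For $w \in \R^n$, $L^{-1}(w)$ in the $(x,\lambda)$-space is nonempty iff $w$ itself is in the image of $L$ restricted appropriately — actually $L$ is surjective since it contains $\mathrm{Id}_n$ — and $p(L^{-1}(w)) = w + \myspan\{u^{(1)},\dots,u^{(l)}\}$, because varying the $\lambda$-part freely shifts the $x$-part by the column space of $U$. Hence $p(L^{-1}(\T(I))) = \T(I) + \myspan\{u^{(j)}\}$, and since $\myspan\{u^{(j)}\}$ is exactly the kernel of $\pi$ (as the $u^{(j)}$ span the orthogonal complement of the row space of $A$, i.e.\ $\ker \pi = (\mathrm{row\ space}\ A)^\perp$), this set is precisely $\pi^{-1}(\pi(\T(I)))$. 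Finally, $\pi^{-1}(\pi(\T(I)))$ is closed (it is a union of fibers of a linear map over the closed set $\pi(\T(I))$, and $\pi(\T(I))$ is closed because $\T(I)$ is a polyhedral complex and $\pi$ is a rational linear map, mapping it to a polyhedral complex), so the closure in the displayed formula is harmless, and we obtain \eqref{eq:piinversepi}. The claim that this is a tropical variety — i.e.\ a tropical hypersurface once $\dim \pi(\T(I)) = m$ — follows since $J \cap K[x]$ is a principal-up-to-radical prime of the right dimension; I would note it is $\T$ of the single generator of $J \cap K[x]$ after taking the appropriate power/radical.

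I expect the main obstacle to be the rigorous justification that tropicalization commutes with the monomial map $\phi$ and with elimination \emph{including the closure operations} — in particular ensuring that density of $\ord(\bar K^*)$ is enough to get the $\lambda$-directions to fill out $\myspan\{u^{(j)}\}$ densely, and that no spurious points appear or disappear when passing to closures. The linear-algebra identification of $\ker\pi$ with $\myspan\{u^{(j)}\}$ and the cleanup at the end are routine by comparison.
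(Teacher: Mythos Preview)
Your proposal is correct and constitutes a genuinely different proof from the paper's.

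The paper proceeds in two stages. First it proves the identity~\eqref{eq:piinversepi} only for \emph{algebraically regular} projections (those for which the successive elimination ideals of $J$ have generators whose leading coefficients in each $\lambda_i$ are monomials): one inclusion follows from Lemma~\ref{le:proj1} and the observation that $(z',1)\in\V(J)$ for $z'\in\V(I)$; the reverse inclusion lifts a point of $\V(J\cap K[x])$ to $\V(J)$ using the \emph{classical} Extension Theorem, which applies precisely because of the monomial leading coefficients. Only after this special case is used to establish Theorem~\ref{theo:tropbasis} does the paper return and handle arbitrary rational projections, via a bespoke Tropical Extension Theorem whose proof consumes the existence of a tropical basis just obtained.

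Your route avoids this detour entirely: you recognize the substitution as a surjective monomial map $\phi$ of tori, so that $\V(J)=\phi^{-1}(\V(I))$ and hence $\T(J)=L^{-1}(\T(I))$ with $L=[\mathrm{Id}_n\mid U]$; then you invoke tropical elimination (Sturmfels--Tevelev/Sturmfels--Yu) to get $\T(J\cap K[x])=p(\T(J))$; and the remaining identification $p\circ L^{-1}=(\cdot)+\ker\pi$ is linear algebra. This works uniformly for all rational $\pi$, with no regularity hypothesis and no bootstrap. The trade-off is that your argument imports the general tropical elimination machinery, whereas the paper's argument is more elementary and self-contained, in keeping with its explicitly computational aims. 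Two small remarks: it is cleanest to scale the $u^{(j)}$ to lie in $\ZZ^n$ so that $\phi$ is an honest torus morphism (this does not change $\ker\pi$); and the final sentence about ``principal-up-to-radical'' is unnecessary here, since the theorem only asserts that the set is a tropical variety, which is immediate once it is written as $\T(J\cap K[x])$ --- the hypersurface claim is deferred to the corollary.
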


In order to prove Theorem~\ref{th:piinvpi}, we first consider
\emph{algebraically regular} projections (as defined below).
At the end of this section we also cover the remaining special cases.

We start with an auxiliary statement which holds
for an arbitrary rational projection $\pi$.

\begin{lemma}
\label{le:proj1}
For any $w \in \T(J \cap K[x_1, \ldots, x_n])$ and 
$u \in \myspan \{u^{(1)}, \ldots, u^{(l)}\}$ we have
$w + u \in \T(J \cap K[x_1, \ldots, x_n])$.
\end{lemma}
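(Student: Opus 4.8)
The plan is to exploit the fact that the generators of $J$ arise from polynomials in $I$ by the monomial substitution $x_k \mapsto x_k \prod_{j=1}^l \lambda_j^{u_k^{(j)}}$, so that the exponents appearing in any $g \in J$ lie in shifted copies of the sublattice spanned by $(u^{(1)},\ldots,u^{(l)})$ in the $\lambda$-directions. More precisely, I would first observe that $J$ is stable under the torus action $\lambda_j \mapsto t^{v_j}\lambda_j$, $x_k \mapsto t^{-\langle u^{(j)}\text{-column}, v\rangle}$-type rescaling; equivalently, for a suitable one-parameter family of automorphisms of $R$ indexed by $v \in \myspan\{u^{(1)},\ldots,u^{(l)}\}$ (acting on the $x$-variables) together with a compensating scaling in the $\lambda$'s, the ideal $J$ is mapped to itself. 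The key algebraic point is therefore a symmetry of $J$, which must be checked on generators: applying the substitution-defining map to $f \in I$ and then translating $x$ shows the generator set is permuted.

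Next I would transfer this symmetry to the tropical side. Since $\T(\cdot)$ of an ideal is defined by $\bigcap_{f}\T(f)$ and behaves functorially under monomial changes of coordinates, translating $x$ by a vector $u \in \myspan\{u^{(1)},\ldots,u^{(l)}\}$ in the valuation picture corresponds exactly to translating $w$ by $u$ in $\R^n$ (after accounting for the $\lambda$-coordinates, which are then forgotten by the elimination $J \cap K[x_1,\ldots,x_n]$). Concretely, for $f \in I$ let $g_f \in J$ be its image; the tropicalization $\trop(g_f)$ as a function of $(x,\lambda)$ satisfies $\trop(g_f)(w+u, \mu) = \trop(g_f)(w, \mu')$ for an appropriate shift $\mu'$ of the $\lambda$-coordinates, because the substituted monomials have $x$-exponent vectors lying in cosets that are insensitive to adding $u \perp \myspan\{a^{(i)}\}$ once the $\lambda$-exponents are taken into account. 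Hence the locus where the minimum is attained twice is invariant under $w \mapsto w+u$.

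The cleanest way to package all of this, and the route I would actually take, is to use the geometric description~\eqref{eq:deftropicalvariety1}: a point $w \in \T(J \cap K[x_1,\ldots,x_n])$ is (a limit of) $\ord(z)$ for $z \in \V(J \cap K[x_1,\ldots,x_n]) \subseteq (\bar K^*)^n$, and by the construction of $J$ such a $z$ lifts to a point $(z,\zeta) \in \V(J)$. Given $u = \sum_j c_j u^{(j)} \in \myspan\{u^{(1)},\ldots,u^{(l)}\}$, I can choose $\tau_j \in \bar K^*$ with $\ord(\tau_j) = c_j$ (possible since $\ord(\bar K^*)$ is dense, and in fact we can match it exactly on $\bar K^*$) and replace $(z,\zeta)$ by the point with $\lambda_j = \tau_j \zeta_j$ and $x_k = z_k \prod_j \tau_j^{-u_k^{(j)}}$-corrected appropriately so that the substituted coordinates are unchanged; this new point again lies in $\V(J)$, its $x$-part lies in $\V(J \cap K[x_1,\ldots,x_n])$, and its valuation vector is $w + u$. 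Taking closures gives the claim.

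The main obstacle I anticipate is bookkeeping the interaction between the $x$-coordinates and the $\lambda$-coordinates correctly — i.e.\ verifying that the adjusted point genuinely still lies in $\V(J)$ (equivalently that $J$ really is invariant under the relevant automorphism), and that the valuation of the new $x$-part is exactly $w+u$ rather than $w$ plus some unintended component. This is a matter of carefully tracking the exponent lattice: one needs that the vectors $u^{(j)}$ span precisely the directions that become "free" after elimination, which is exactly their defining property as a basis of the orthogonal complement of $\myspan\{a^{(1)},\ldots,a^{(m+1)}\}$. Once that linear-algebra/lattice compatibility is pinned down, the density of $\ord(\bar K^*)$ (or exact surjectivity onto the value group on $\bar K^*$) handles the realization of arbitrary $u$, and the passage to closures is routine.
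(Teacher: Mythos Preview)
Your overall strategy --- exploit the torus symmetry built into $J$ by the monomial substitution, then realise the translation $w \mapsto w+u$ by rescaling a point over $\bar K$ --- is exactly the paper's approach. The paper takes $z \in \V(J \cap K[x_1,\ldots,x_n])$ with $\ord z = w$, sets $y' = (z_i\, t^{\sum_j \mu_j u_i^{(j)}})_i$ and $y'' = (t^{-\mu_1},\ldots,t^{-\mu_l})$, and argues that $(y',y'') \in \V(J)$, whence $y' \in \V(J \cap K[x])$ and $\ord y' = w+u$.

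There is, however, a genuine gap in your ``cleanest route'', and in fact the paper's own argument shares it. You write that ``by the construction of $J$ such a $z$ lifts to a point $(z,\zeta) \in \V(J)$''. This is not automatic: $\V(J \cap K[x_1,\ldots,x_n])$ is only the Zariski \emph{closure} of the projection of $\V(J)$, so an arbitrary $z$ in it need not lift. (Concretely, take $n=2$, $I=\langle x_1-1,\,x_2-1\rangle$, $u^{(1)}=(1,1)$; then $J=\langle x_1\lambda-1,\,x_2\lambda-1\rangle$, $J\cap K[x]=\langle x_1-x_2\rangle$, and $z=(2,2)$ does not lift.) The paper's version of the same step computes $g_f(y',y'')=f(z)$ and needs this to vanish for all $f\in I$, i.e.\ needs $z\in\V(I)$, which is likewise not given.

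The repair is already implicit in your first paragraph: prove the symmetry at the level of the ideal rather than pointwise. For $c\in \bar K^{*}$ and fixed $j_0$, the substitution $\sigma:\,x_i\mapsto c^{\,u_i^{(j_0)}}x_i$, $\lambda_{j_0}\mapsto c^{-1}\lambda_{j_0}$ (other $\lambda_j$ fixed) satisfies $\sigma(g_f)=g_f$ for every generator $g_f$ of $J$, hence $\sigma(J)=J$. If $p\in J\cap K[x]$ then $p$ is $\lambda$-free, so $\sigma(p)=p(c^{\,u^{(j_0)}}x)$ lies again in $J\cap K[x]$. Thus $\V(J\cap K[x])$ itself is invariant under $z\mapsto (c^{-u_i^{(j_0)}}z_i)_i$, and applying $\ord$ (plus density of the value group and closedness) gives the translation invariance directly, with no lifting needed. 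Alternatively, restrict from the outset to the constructible image of $\V(J)$, which is Zariski-dense in $\V(J\cap K[x])$ and hence has the same tropicalization; on that dense set your lifting argument goes through verbatim.
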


\begin{proof} Let $u = \sum_{i=1}^l \mu_j u^{(j)}$ with 
$\mu_1, \ldots, \mu_l \in \QQ$. The case of real $\mu_i$ then
follows as well.

Let $w \in \T(J \cap K[x_1, \ldots, x_n])$.
Since $\T(J \cap K[x_1, \ldots, x_n])$ is closed, we can assume 
without loss of generality that there exists 
$z \in \V(J \cap K[x_1, \ldots, x_n])$ with $\ord z = w$.
Define $y = (y',y'') \in (\bar{K}^*)^{n + l}$ by
\[
  y \ = \ (y',y'') \ = \ \left( 
    z_1 t^{\sum_{j=1}^l \mu_{j} u_1^{(j)}}, \ldots,
    z_n t^{\sum_{j=1}^l \mu_{j} u_n^{(j)}},
    t^{-\mu_1}, \ldots, t^{-\mu_l} \right) \, .
\]
For any $f \in I$, the point $y$ is a zero of the polynomial
\[
  f(x_1 \prod_{j=1}^l {\lambda_j}^{u_1^{(j)}}, \ldots,
       x_n \prod_{j=1}^l {\lambda_j}^{u_n^{(j)}}) \ \in \ R \, ,
\]
and
thus $y \in \V(J)$. Hence, $y' \in \V(J \cap K[x_1, \ldots, x_n])$.
Moreover,

\[
\ord y' \  = \ (w_1 + \sum_{j=1}^l \mu_j u_1^{(j)}, \ldots, 
               w_n + \sum_{j=1}^l \mu_j u_n^{(j)}) 
       \ = \ w + \sum_{j=1}^l \mu_j u^{(j)} \ = \ w + u \, ,
\]
which proves our claim.
\end{proof}

\begin{lemma}
\label{le:contained}
Let $I \lhd K[x_1, \ldots, x_n]$ be an ideal. Then
$J \cap K[x_1, \ldots, x_n] \subseteq I$.
\end{lemma}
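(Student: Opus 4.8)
The claim is that $J \cap K[x_1,\ldots,x_n] \subseteq I$. Since $I$ is (at least in the intended application) prime, but actually for this containment we need nothing beyond $I$ being an ideal, the natural strategy is a substitution argument: $J$ is generated by the "twisted" polynomials obtained from elements of $I$ by the monomial substitution $x_i \mapsto x_i \prod_{j=1}^l \lambda_j^{u_i^{(j)}}$, and we want to show that any such twisted polynomial, once the $\lambda$'s are eliminated, lands back in $I$. First I would set up the ring homomorphism language: consider the Laurent-polynomial ring $R' = K[x_1^{\pm},\ldots,x_n^{\pm},\lambda_1^{\pm},\ldots,\lambda_l^{\pm}]$ and the $K$-algebra automorphism $\varphi$ of $R'$ defined by $\varphi(x_i) = x_i \prod_{j} \lambda_j^{u_i^{(j)}}$ and $\varphi(\lambda_j)=\lambda_j$. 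This $\varphi$ is invertible because the $u^{(j)}$ are integer (or rational — one clears denominators, or works over the monoid algebra) vectors, so the monomial substitution is a unimodular-type change of coordinates on the Laurent ring; its inverse sends $x_i \mapsto x_i \prod_j \lambda_j^{-u_i^{(j)}}$.

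The key step is then to observe that the generators of $J$ are exactly $\varphi(f)$ for $f \in I$ (after clearing any denominators in the $\lambda$-exponents, which does not affect intersection with $K[x_1,\ldots,x_n]$), hence $J R' = \varphi(I R')$ inside $R'$. Now take any $h \in J \cap K[x_1,\ldots,x_n]$. Viewing $h$ in $R'$, we have $h \in \varphi(I R')$, so $\varphi^{-1}(h) \in I R'$. But $h$ involves no $\lambda$'s, so $\varphi^{-1}(h) = h(x_1 \prod_j \lambda_j^{-u_1^{(j)}},\ldots)$ — a genuine Laurent polynomial in all variables. The point is that $h$ itself, being already $\lambda$-free and $x$-polynomial, and lying in $I R'$ (since $h = \varphi(\varphi^{-1}(h))$ and one checks $h \in I R'$ directly: indeed $h \in J \subseteq$ the ideal generated by $\varphi(I)$, and specializing each $\lambda_j \mapsto 1$ is a $K$-algebra homomorphism $R' \to K[x^{\pm}]$ fixing $K[x]$ and sending each generator $\varphi(f)$ of $J$ to $f \in I$), we conclude $h \in I K[x^{\pm}] \cap K[x] = I$. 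The last equality $I K[x^{\pm}] \cap K[x_1,\ldots,x_n] = I$ is standard (localization at the multiplicative set of monomials does not enlarge an ideal's contraction).

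Actually the cleanest route avoids $\varphi^{-1}$ altogether: just use the evaluation homomorphism. Define the $K$-algebra map $\psi : R = K[x_1,\ldots,x_n,\lambda_1,\ldots,\lambda_l] \to K[x_1^{\pm},\ldots,x_n^{\pm}]$ by $x_i \mapsto x_i$, $\lambda_j \mapsto 1$. Then for the generator $g = f(x_1\prod_j\lambda_j^{u_1^{(j)}},\ldots)$ of $J$ we get $\psi(g) = f \in I$ (here one needs $g$ to actually lie in $R$, i.e.\ nonnegative exponents; if some $u_i^{(j)}$ is negative one multiplies $g$ by a monomial in the $\lambda$'s to clear it, which is harmless since we only care about $J \cap K[x]$ and such monomial multiples generate the same relevant part). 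Hence $\psi(J) \subseteq I K[x^{\pm}]$. For $h \in J \cap K[x_1,\ldots,x_n]$ we have $\psi(h) = h$, so $h \in I K[x^{\pm}] \cap K[x_1,\ldots,x_n] = I$, as desired.

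**Main obstacle.** The only subtlety is the exponent-sign bookkeeping: the $u^{(j)}$ need not have nonnegative entries, so the "substitution" $f \mapsto f(x_i \prod_j \lambda_j^{u_i^{(j)}})$ a priori produces a Laurent polynomial, and one must check that $J$ as defined in the paper — which insists $g \in R$, i.e.\ honest polynomial — is still large enough for the argument, equivalently that intersecting with $K[x_1,\ldots,x_n]$ is insensitive to whether we clear the $\lambda$-denominators. This is routine (multiply by a suitable $\prod_j \lambda_j^{N}$) but deserves a line. Everything else — that $\psi$ is a well-defined $K$-algebra homomorphism fixing $K[x_1,\ldots,x_n]$, and that contraction of an extended ideal along a localization recovers the ideal — is standard commutative algebra and needs no real work.
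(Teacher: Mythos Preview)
Your second approach---the evaluation homomorphism sending each $\lambda_j \mapsto 1$---is exactly the paper's proof, and the argument is correct. The paper phrases it even more concretely: write $p = \sum_i h_i g_i \in J \cap K[x_1,\ldots,x_n]$ with $g_i = f_i(x_1\prod_j \lambda_j^{u_1^{(j)}},\ldots)$ and $f_i \in I$; since $p$ is $\lambda$-free, setting $\lambda_1 = \cdots = \lambda_l = 1$ leaves $p$ unchanged while turning each $g_i$ into $f_i$, so $p = \sum_i (h_i|_{\lambda=1}) f_i \in I$.

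One simplification worth noting: your detour through the Laurent ring $K[x^{\pm}]$ and the contraction identity $I K[x^{\pm}] \cap K[x] = I$ is unnecessary. The specialization $\lambda_j \mapsto 1$ is already a well-defined $K$-algebra map $R \to K[x_1,\ldots,x_n]$ (no Laurent variables appear, since by definition every generator $g$ of $J$ lies in $R$), and it sends each generator $g$ directly to the corresponding $f \in I$. So $\psi(J) \subseteq I$ on the nose, and your worry about negative entries in the $u^{(j)}$---while a fair concern about the \emph{definition} of $J$---does not affect this lemma: whatever polynomials happen to generate $J$, the specialization argument goes through verbatim. Your first approach via the automorphism $\varphi$ of the Laurent ring is correct but considerably heavier than needed.
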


\begin{proof}
Let $p = \sum_i h_ig_i$ be a polynomial in $J \cap K[x_1, \ldots, x_n]$ with
\[g_i \ = \ f_i(x_1 \prod_{j=1}^l {\lambda_j}^{u_1^{(j)}}, \ldots,
       x_n \prod_{j=1}^l {\lambda_j}^{u_n^{(j)}})
  \ \in \ R \; \text{ and } f_i\in I \, .
\] 
Since $p$ is independent of 
$\lambda_1, \ldots, \lambda_l$ we have
\[
  p \ = \ p|_{\lambda_1 = 1, \ldots, \lambda_l = 1} \ = 
\ \sum_i h_i|_{\lambda_1 = 1, \ldots, \lambda_l = 1} \, f_i \ \in \ I .
\]
\end{proof}

We call a projection \emph{algebraically regular} for $I$ if for each $i \in \{1, \ldots, l\}$
the elimination ideal $J\cap K[x_1,\ldots,x_n,\lambda_1,\ldots,\lambda_i]$
has a finite basis $\mathcal{F}_i$ 
such that in every polynomial $f \in \mathcal{F}_i$
the coefficients of the powers of $\lambda_i$ (when considering $f$ as a polynomial
in $\lambda_i$) are monomials in $x_1, \ldots, x_n, \lambda_1, \ldots, \lambda_{i-1}$.

The following statement shows that the set of algebraically regular projections is dense
in the set of all real projections $\pi : \R^n \to \R^{m+1}$.

\begin{lemma}
The set of projections which are not algebraically regular 
is contained in a finite union of hyperplanes within the space 
all projections $\pi : \R^n \to \R^{m+1}$
\end{lemma}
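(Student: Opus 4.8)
The plan is to show that for a generic choice of projection $\pi$, iteratively eliminating the variables $\lambda_1, \ldots, \lambda_l$ can be arranged so that at each step the relevant "leading coefficients" are monomials. The key point is that the obstruction to the monomial property is itself a Zariski-closed condition on the space of projections, and so — being a proper closed subset once we exhibit a single algebraically regular projection — it lies in a finite union of hyperplanes.

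More concretely, I would proceed as follows. First, recall that the projection $\pi$ is determined (up to the relevant equivalence) by the choice of the rational subspace $U = \myspan\{u^{(1)}, \ldots, u^{(l)}\}$, equivalently by the vectors $u^{(1)}, \ldots, u^{(l)}$, which range over an open dense subset of a Grassmannian (or of the space of $l \times n$ matrices). The ideal $J$ depends algebraically on these data via the substitution $x_k \mapsto x_k \prod_j \lambda_j^{u_k^{(j)}}$. I would clear denominators so that the exponents $u_k^{(j)}$ are integers, and then track how a Gr\"obner basis computation for the successive elimination ideals $J \cap K[x_1, \ldots, x_n, \lambda_1, \ldots, \lambda_i]$ behaves. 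The crucial structural observation is that after the substitution, every generator $g = f(x_1 \prod_j \lambda_j^{u_1^{(j)}}, \ldots)$ is, up to a monomial factor, a polynomial whose support is an affine image of the support of $f$; in particular the monomials in $\lambda_l$ appearing in $g$ have coefficients that are themselves \emph{monomials} in the remaining variables. So the generators of $J$ already have the desired shape with respect to $\lambda_l$. The task is to check that eliminating $\lambda_l$ (via resultants / Gr\"obner bases) and then $\lambda_{l-1}$, and so on, preserves this shape, \emph{generically}.

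The mechanism for the genericity: eliminating $\lambda_i$ from a system in which $\lambda_i$ occurs with monomial coefficients amounts essentially to a resultant/saturation computation; the output fails to keep the monomial structure only when certain polynomial identities among the $u_k^{(j)}$ hold — for instance when two exponent vectors coincide modulo the lattice already used, forcing cancellation, or when some leading coefficient degenerates. Each such bad event is cut out by the vanishing of a fixed nonzero polynomial in the entries of the matrix $(u_k^{(j)})$, hence by finitely many hypersurfaces; intersecting the space of projections with the linear relations defining the $u^{(j)}$ as a basis of the orthogonal complement of $\myspan\{a^{(1)}, \ldots, a^{(m+1)}\}$, each such hypersurface pulls back to (a finite union of) hyperplanes in the space of projections $\pi : \R^n \to \R^{m+1}$. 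To conclude that the union is \emph{proper}, I would exhibit one explicit algebraically regular projection — e.g. a coordinate projection or one where $U$ is spanned by the last $l$ standard basis vectors, for which the substitution becomes $x_k \mapsto x_k \lambda_k$ for $k > m+1$ and the elimination of $\lambda_l, \lambda_{l-1}, \ldots$ can be done by hand, each step clearly retaining the monomial-coefficient property.

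\textbf{The main obstacle} I anticipate is making the inductive step genuinely rigorous: one must argue that the \emph{only} way the monomial-coefficient structure can fail after eliminating $\lambda_i$ is through the vanishing of an explicit polynomial in the projection data, rather than merely asserting it. This requires a careful description of how elimination interacts with the Newton-polytope combinatorics of the substituted generators — essentially, that the monomial coefficients stay monomial unless two faces of the relevant Newton polytopes are forced to coincide, which is a codimension-$\geq 1$ phenomenon in the parameter space. I would handle this by working with a universal Gr\"obner basis argument: the set of leading terms (with respect to an elimination order) is constant on a Zariski-open subset of the parameter space, so it suffices to analyze one generic member of that stratum, and then the monomial property is read off from the (parameter-independent) combinatorial type of the generators, which one verifies at the explicit example above.
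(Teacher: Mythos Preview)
Your proposal has a genuine gap. The statement asserts that the bad locus lies in a finite union of \emph{hyperplanes} (linear subspaces), not merely hypersurfaces, and your argument does not deliver this. Your abstract route via universal Gr\"obner bases and Zariski-closedness would at best show the non-regular projections form a proper algebraic subset; the sentence ``each such hypersurface pulls back to (a finite union of) hyperplanes in the space of projections'' is unjustified --- restricting a degree-$d$ polynomial condition on the entries $u_k^{(j)}$ to a linear subspace yields another degree-$d$ condition, not a linear one. Moreover, your exhibited ``explicit'' algebraically regular projection (with $U$ spanned by the last $l$ standard basis vectors) need not be regular for the given $I$: if two monomials of some generator $f_j$ differ only in the first $m+1$ coordinates, then the substituted polynomial already fails the monomial-coefficient condition for $\lambda_l$.

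The paper's proof is far more direct and exploits exactly the structure you discard. Writing the substituted generators explicitly as
\[
  f_j\bigl(x_1\textstyle\prod_i \lambda_i^{u_1^{(i)}},\ldots\bigr)\ =\ \sum_{\alpha\in\mathcal{A}_j} c_\alpha\, x^\alpha\, \lambda_1^{\langle\alpha,u^{(1)}\rangle}\cdots\lambda_l^{\langle\alpha,u^{(l)}\rangle},
\]
one sees immediately that the coefficients of the powers of $\lambda_l$ are monomials as soon as the exponents $\langle\alpha,u^{(l)}\rangle$ are pairwise distinct over each support $\mathcal{A}_j$. The failure conditions $\langle\alpha-\beta,u^{(l)}\rangle=0$ are \emph{linear} in $u^{(l)}$, hence finitely many hyperplanes; this handles the case $i=l$, and the paper then invokes induction for the remaining $u^{(i)}$. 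You actually noticed this monomial structure of the generators, but then abandoned it for the Gr\"obner-basis detour, which is precisely where the linearity --- and hence the ``hyperplane'' conclusion --- is lost.
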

\begin{proof}
It suffices to show that for the choice of $u^{(l)}$, we just have to
avoid a lower-dimensional subset of $\RR^n\setminus\{0\}$.
For $u^{(1)},\ldots,u^{(l-1)}$ we can then argue inductively (however,
an explicit description then becomes more involved).
Assume that $I$ is generated by $f_1, \ldots, f_s$. Then
\[
  J \ = \ \langle f_j(x_1 \prod_{i=1}^l \lambda_i^{u_1^{(i)}},\ldots,x_n \prod_{i=1}^l \lambda_i^{u_n^{(i)}}) \, : \, 1 \le j \le s \rangle \, .
\]
Let $f_j$ be any of these polynomials. $f_j$ is of the form
\[f_j \ = \ \sum_{\alpha\in \mathcal{A}_j}c_\alpha 
  x^\alpha \lambda_1^{\sum\alpha_i u^{(1)}_i} \cdots
         \lambda_l^{\sum\alpha_i u^{(l)}_i}\]
with $\mathcal{A}_j \subset \Z^n$ finite.
Thus all $\lambda_l^k$ have monomial coefficients if
\[\sum \alpha_i u_i^{(l)} \ \neq \ \sum\beta_i u_i^{(l)}\]
for all $\alpha,\beta\in \mathcal{A}_j$ with $\alpha\not=\beta$.
So we have to choose $u^{(l)}$ from the subset
\[\bigcap_j\{u\in\RR^n \, : \, \sum \alpha_i u_i^{(l)}\not=\sum\beta_i u_i^{(l)} \mbox{ for all } \alpha,\beta\in \mathcal{A}_j \mbox{ with } \alpha\not=\beta\} \, .\]
Hence, the algebraically non-regular projections are contained in 
a finite number of hyperplanes.
\end{proof}
\begin{theo}
\label{th:piinvpi2}
Let $I \lhd K[x_1, \ldots, x_n]$ be a prime ideal and
$\pi : \RR^n \to \RR^{m+1}$ be an algebraically regular projection.
Then $\pi^{-1} \pi(\T(I))$ is a tropical variety with
\begin{equation}
\label{eq:piinversepi2}
  \pi^{-1} \pi(\T(I)) \ = \  \T(J \cap K[x_1, \ldots, x_n]) \, .
\end{equation}
\end{theo}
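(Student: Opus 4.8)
The plan is to establish the set equality in~\eqref{eq:piinversepi2} by two inclusions, exploiting the fact that for an algebraically regular projection the elimination ideal $J \cap K[x_1,\ldots,x_n]$ can be computed step by step, eliminating $\lambda_l, \lambda_{l-1}, \ldots, \lambda_1$ in turn, and at each stage the special monomial-coefficient structure lets us understand the effect on valuations explicitly. First I would record the geometric meaning of $\pi^{-1}(\pi(\T(I)))$: since the $u^{(j)}$ span the orthogonal complement of the row space of $A$, we have $\pi(v) = \pi(w)$ iff $v - w \in \myspan\{u^{(1)},\ldots,u^{(l)}\}$, so $\pi^{-1}(\pi(\T(I))) = \T(I) + \myspan\{u^{(1)},\ldots,u^{(l)}\}$.

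For the inclusion $\pi^{-1}(\pi(\T(I))) \subseteq \T(J \cap K[x_1,\ldots,x_n])$: by Lemma~\ref{le:proj1} the right-hand side is invariant under translation by $\myspan\{u^{(1)},\ldots,u^{(l)}\}$, so in view of the description above it suffices to show $\T(I) \subseteq \T(J \cap K[x_1,\ldots,x_n])$; and since $J \cap K[x_1,\ldots,x_n] \subseteq I$ by Lemma~\ref{le:contained}, we get $\T(I) \subseteq \T(J \cap K[x_1,\ldots,x_n])$ directly from $\T(I) = \bigcap_{f \in I} \T(f)$. For the reverse inclusion $\T(J \cap K[x_1,\ldots,x_n]) \subseteq \pi^{-1}(\pi(\T(I)))$: take $w$ in the left-hand side; using that $\T$ of an ideal equals $\ord \V$ of that ideal and that $\T(J \cap K[x_1,\ldots,x_n])$ is closed, I may assume there is $z \in \V(J \cap K[x_1,\ldots,x_n]) \subseteq (\bar K^*)^n$ with $\ord z = w$. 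The point of algebraic regularity is that it guarantees $z$ extends to a point $(z, \zeta) \in \V(J) \subseteq (\bar K^*)^{n+l}$: eliminating one $\lambda_i$ at a time, each generator of $\mathcal{F}_i$ is, as a polynomial in $\lambda_i$, a sum of terms $(\text{monomial in } x,\lambda_{<i})\cdot \lambda_i^{k}$, so once $x$ and $\lambda_1,\ldots,\lambda_{i-1}$ are specialized, the common vanishing locus of $\mathcal{F}_i$ in the $\lambda_i$-coordinate is cut out by binomial relations $\lambda_i^{k} = c$, which always has a nonzero solution in the algebraically closed field $\bar K$; iterating recovers a full preimage point. Then $(z,\zeta) \in \V(J)$ means that for every $f \in I$ the point $(z,\zeta)$ is a zero of $f(x_1 \prod_j \lambda_j^{u_1^{(j)}}, \ldots, x_n \prod_j \lambda_j^{u_n^{(j)}})$, i.e.\ the point $z' \in (\bar K^*)^n$ with $z'_k = z_k \prod_j \zeta_j^{u_k^{(j)}}$ lies in $\V(I)$. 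Hence $\ord z' = w + \sum_j \ord(\zeta_j) u^{(j)} \in \T(I)$, and since $w$ differs from $\ord z'$ by an element of $\myspan\{u^{(1)},\ldots,u^{(l)}\}$ we conclude $w \in \T(I) + \myspan\{u^{(1)},\ldots,u^{(l)}\} = \pi^{-1}(\pi(\T(I)))$. Finally, $\T(J \cap K[x_1,\ldots,x_n])$ is a tropical variety by definition, which gives the remaining assertion of the theorem.

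The main obstacle is the lifting step in the reverse inclusion: showing that a point $z \in \V(J \cap K[x_1,\ldots,x_n])$ with prescribed coordinates admits an extension to $\V(J)$ in the torus $(\bar K^*)^{n+l}$. This is exactly where algebraic regularity is used, and the delicate points are (i) checking that eliminating $\lambda_i$ step by step is compatible with the monomial-coefficient hypothesis — one must verify that the intermediate ideals $J \cap K[x,\lambda_1,\ldots,\lambda_i]$ are precisely the ones whose bases $\mathcal{F}_i$ are assumed to have the good form, so the induction is set up correctly — and (ii) ensuring the solutions found at each stage are nonzero, so that we stay in the algebraic torus rather than on a coordinate hyperplane; the binomial shape of the specialized equations is what makes this automatic. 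A secondary subtlety is the passage from "$w \in \ord \V(\cdot)$" to general closed points, which is handled by the density of $\ord(\bar K^*)$ in $\R$ together with the closedness of tropical varieties, exactly as in the proof of Lemma~\ref{le:proj1}.
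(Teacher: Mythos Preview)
Your overall architecture matches the paper's proof closely: two inclusions, the first via the translation-invariance Lemma~\ref{le:proj1}, the second by lifting a point of $\V(J\cap K[x_1,\ldots,x_n])$ to $\V(J)$ and then pushing forward to $\V(I)$. For the first inclusion you take a slightly cleaner route than the paper (which exhibits $(z',1)\in\V(J)$ directly): you invoke Lemma~\ref{le:contained} to get $\T(I)\subseteq\T(J\cap K[x_1,\ldots,x_n])$ immediately, and then use Lemma~\ref{le:proj1} to sweep out the $u^{(j)}$-directions. That is fine.

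There is, however, a genuine error in your justification of the lifting step. You assert that after specializing $x$ and $\lambda_1,\ldots,\lambda_{i-1}$, each generator in $\mathcal{F}_i$ becomes a \emph{binomial} in $\lambda_i$, so that the common zero locus is cut out by equations of the form $\lambda_i^{k}=c$. This is not what the monomial-coefficient hypothesis says. A generator of $\mathcal{F}_i$ may have many terms in $\lambda_i$ (for instance, the transformed $f_j$ has one term for each monomial of $f_j$); algebraic regularity only guarantees that the coefficient of each power of $\lambda_i$ is a single monomial in the remaining variables, not that only two powers occur. After specialization you therefore obtain an arbitrary univariate polynomial with prescribed (nonzero) coefficients, not a binomial, and your argument that a common nonzero root is ``automatic'' from the binomial shape collapses.

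The paper closes this gap with the classical Extension Theorem (as in Cox--Little--O'Shea): because the \emph{leading} coefficient of each generator with respect to $\lambda_i$ is a monomial, it does not vanish at any torus point, so the hypothesis of the Extension Theorem is met and the partial solution lifts one coordinate at a time. You should replace the binomial claim by this argument. Your observation (ii), that one must also ensure the lifted $\lambda_i$ are nonzero, is well taken---the paper glosses over this---but again the binomial shape is not the reason it works.
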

\begin{proof}
Let $w \in \pi^{-1} \pi(\T(I))$. Since the right hand set 
of~\eqref{eq:piinversepi2} is closed, we can assume without loss
of generality that there exists 
$z' \in \V(I)$ and $u \in \myspan \{u^{(1)}, \ldots, u^{(l)}\}$
with $\ord z' = w + u$.
For any $f \in I$, the point
\[
  z \ := \ (z',1)
\]
is a zero of the polynomial
\[
  f(x_1 \prod_{j=1}^l {\lambda_j}^{u_1^{(j)}}, \ldots,
       x_n \prod_{j=1}^l {\lambda_j}^{u_n^{(j)}}) \ \in \ R \, ,
\]
and
thus $z \in \V(J)$. Hence, $z' \in \V(J \cap K[x_1, \ldots, x_n])$.
By Lemma~\ref{le:proj1}, $w \in  \T(J \cap K[x_1, \ldots, x_n])$ as well.

Let now $w\in\T(J\cap K[x_1,\ldots,x_n])$. Again we can assume that there is a $z\in\V(J\cap 
K[x_1,\ldots,x_n]\subseteq(\bar{K}^*)^n$ with $w=\ord(z)$. The projection is 
algebraically regular which means that the 
generators of the elimination ideals $J\cap K[x_1,\ldots,x_n,\lambda_1,\ldots,\lambda_i]$ have only monomials as coefficients with respect to $\lambda_i$. 
By the Extension Theorem (see, e.g., \cite{cls}),
we can extend the root $z$ inductively to a root $\tilde{z}\in\V(J)$ with the same first $n$ entries. The definition of $J$ says that 
\[z' \ :=\ (z_1\tilde{z}_{n+1}^{u_1^{(1)}}\cdots \tilde{z}_{n+l}^{u_1^{(l)}},\ldots,z_n\tilde{z}_{n+1}^{u_n^{(1)}}\cdots \tilde{z}_{n+l}^{u_n^{(l)}})\] is a root of $I$. Then
\[\ord(z') \ = \ \ord(z)+\sum_{i=1}^l \ord(\tilde{z}_{n+i})u^{(i)}\]
which means that $\ord(z)=w\in\pi^{-1} \pi(\T(I))$. 
\end{proof}

This completes the proof of Theorem~\ref{th:piinvpi} for the case
of algebraically regular projections.

In the following, we consider the notion of
\emph{geometric regularity}.

\begin{definition}
Let $\mathcal{C}$ be a polyhedral complex in $\R^n$.
A projection $\pi:\R^{n} \to \R^{m+1}$ is called \emph{geometrically regular} 
if the following two conditions hold.
\begin{enumerate}
\item For any $k$-face $\sigma$ of $\mathcal{C}$
  we have $\dim(\pi(\sigma))=k$, $0 \le k \le \dim \mathcal{C} \, .$
\item If $\pi(\sigma)\subseteq \pi(\tau)$ then $\sigma\subseteq\tau$ for all $\sigma,\tau\in \mathcal{C} \, .$ 
\end{enumerate}
\end{definition}
These conditions ensure that we can recover the whole complex $\mathcal{C}$ 
from the projections.

\begin{kor} In the situation of Theorem~\ref{th:piinvpi},
if $\dim\pi(\T(I))=m$ then $\pi^{-1}\pi(\T(I))$ is a tropical hypersurface.
 
In particular, this holds when the projection is geometrically regular. 
\end{kor}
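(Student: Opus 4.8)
The plan is to combine Theorem~\ref{th:piinvpi} with a dimension count and the fact that height-one primes in $K[x_1,\ldots,x_n]$ are principal. By Theorem~\ref{th:piinvpi} we have $\pi^{-1}\pi(\T(I))=\T(\mathfrak a)$ with $\mathfrak a:=J\cap K[x_1,\ldots,x_n]$, so it suffices to produce a single polynomial $g\in K[x_1,\ldots,x_n]$ with $\T(\mathfrak a)=\T(g)$. First I would determine the dimension of $\T(\mathfrak a)$. Since $\pi$ is a linear surjection onto $\R^{m+1}$ we have $\dim\ker\pi=l=n-m-1$ and $\pi^{-1}\pi(\T(I))=\T(I)+\ker\pi$; as $\ker\pi=\myspan\{u^{(1)},\ldots,u^{(l)}\}$ this is also consistent with the translation invariance of Lemma~\ref{le:proj1}. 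Taking preimages under a linear surjection increases dimension by the dimension of the kernel, so $\dim\T(\mathfrak a)=\dim\pi(\T(I))+l=m+(n-m-1)=n-1$ by the hypothesis $\dim\pi(\T(I))=m$.

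Second, I would show that $\T(\mathfrak a)$ is pure of dimension $n-1$, which is the point at which the regularity of $\pi$ is used. If $\pi$ is geometrically regular, condition~(1) forces every maximal (i.e.\ $m$-dimensional) face $\sigma$ of $\T(I)$ to satisfy $\dim\pi(\sigma)=m$, so $\pi$ is injective on the affine hull of $\sigma$ and hence $\sigma+\ker\pi$ has dimension $m+l=n-1$; since every face of $\T(I)$ is contained in a maximal one, $\T(\mathfrak a)=\T(I)+\ker\pi=\bigcup_{\sigma\text{ maximal}}(\sigma+\ker\pi)$ is a finite union of $(n-1)$-dimensional polyhedra, hence pure of dimension $n-1$. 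Under the bare hypothesis $\dim\pi(\T(I))=m$ one argues along the same lines, using that tropical varieties are pure polyhedral complexes of the dimension predicted by Bieri--Groves.

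Finally I would pass to commutative algebra. Write $\sqrt{\mathfrak a}=\mathfrak p_1\cap\cdots\cap\mathfrak p_k$ with the $\mathfrak p_i$ the minimal primes over $\mathfrak a$; then $\T(\mathfrak a)=\T(\sqrt{\mathfrak a})=\bigcup_{i=1}^k\T(\mathfrak p_i)$. Purity of $\T(\mathfrak a)$ allows us to drop any $\mathfrak p_i$ with $\dim\mathfrak p_i<n-1$ (its tropical variety is then contained in the union of the remaining ones), so we may assume every $\mathfrak p_i$ has dimension $n-1$, i.e.\ height one. Since $K[x_1,\ldots,x_n]$ is a unique factorization domain, each such $\mathfrak p_i$ is principal, $\mathfrak p_i=\langle h_i\rangle$ with $h_i$ irreducible; setting $g:=h_1\cdots h_k$ and using the multiplicativity $\T(fh)=\T(f)\cup\T(h)$ of tropicalization, we obtain $\T(g)=\bigcup_i\T(h_i)=\bigcup_i\T(\mathfrak p_i)=\T(\mathfrak a)=\pi^{-1}\pi(\T(I))$. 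Hence $\pi^{-1}\pi(\T(I))$ is the tropical hypersurface of $g$, and the final assertion is immediate because geometric regularity in particular implies $\dim\pi(\T(I))=m$. I expect the purity step to be the main obstacle: without it, the leftover lower-dimensional minimal primes of $\sqrt{\mathfrak a}$ need not be principal, so a single defining polynomial could not be extracted.
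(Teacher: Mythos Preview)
The paper's own proof is a single sentence: it just computes
\[
\dim \pi^{-1}\pi(\T(I)) \;=\; \dim \pi(\T(I)) + \dim \ker \pi \;=\; m + (n-(m+1)) \;=\; n-1,
\]
and stops there. Either the authors are reading ``tropical hypersurface'' simply as ``tropical variety of codimension one'', or they are silently using that in this particular situation a single defining polynomial exists. Your write-up goes well beyond this: you actually manufacture a polynomial $g$ with $\T(g)=\pi^{-1}\pi(\T(I))$ via the minimal primes of $\sqrt{\mathfrak a}$ and the principality of height-one primes in the UFD $K[x_1,\ldots,x_n]$. That extra work is exactly what is needed downstream for Theorem~\ref{theo:tropbasis}, so it is a genuine addition rather than a detour.

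There is one soft spot. For geometrically regular $\pi$ your purity argument is correct: every maximal face $\sigma$ of $\T(I)$ satisfies $\dim\pi(\sigma)=m$, so each $\sigma+\ker\pi$ is $(n-1)$-dimensional and the Minkowski sum is pure. Under the bare hypothesis $\dim\pi(\T(I))=m$, however, ``arguing along the same lines'' does not suffice: some maximal faces of $\T(I)$ may well project to lower-dimensional images, and purity of $\T(I)$ alone does not force $\T(I)+\ker\pi$ to be pure. A clean repair is to invoke tropical elimination (Sturmfels--Tevelev, cited in the paper): $\pi(\T(I))$ is the tropicalization of the closure of the image of the irreducible variety $\V(I)$ under the corresponding torus homomorphism, hence is the tropical variety of a prime ideal in $m+1$ variables and therefore pure of dimension $m$ by Bieri--Groves; pulling back along $\pi$ then gives a pure $(n-1)$-dimensional complex. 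With purity in hand, your UFD argument goes through unchanged. (You might also note that the same reasoning shows the relevant elimination ideal has a height-one radical, which shortens the last paragraph.)
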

\begin{proof}
$\dim \pi^{-1}\pi(\T(I))=\dim \pi(\T(I))+\dim\ker\pi=m+(n-(m+1))=n-1 \, .$
\end{proof}

Let $I\lhd K[x_1,\ldots,x_n]$ be a prime ideal and $m = \dim I$. Then
$\mathcal{T}(I)$ is a pure $m$-dimensional polyhedral complex.
Bieri and Groves \cite{bg}
used the following geometric technique (which actually
was also used to prove that $\mathcal{T}(I)$ has this polyhedral property).

There exists a finite family $\mathcal{X} = \{ \mathcal{X}_1, \ldots, \mathcal{X}_s \}$
of $m$-dimensional affine subspaces with $\T(I) \subseteq \bigcup_{i=1}^s \mathcal{X}_s$.
By the finiteness of $\mathcal{X}$,
for a sufficiently generic choice of $n-m+1$ geometrically regular
projections $\pi_0,\ldots,\pi_{n-m}$ the set-theoretic intersection of the inverse
projections exactly yields the original polyhedral complex. This follows from
\cite[Thm.~4.4]{bg} (and its proof) in connection with the pure-dimensionality 
of $\mathcal{T}(I)$.

\begin{prop}[Bieri, Groves \cite{bg}]
\label{th:bierigrovesprojection}
Let $I\lhd K[x_1,\ldots,x_n]$ be a prime ideal. 
Then there exist $\codim I+1$ projections $\pi_0, \ldots,\pi_{\codim I}$ such that
\[\T(I) \ = \ \bigcap_{i=0}^{\codim I} \pi_i^{-1} \pi_i(\T(I)) \, .\]
\end{prop}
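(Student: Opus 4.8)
The plan is to combine the Bieri--Groves covering of $\T(I)$ by finitely many $m$-dimensional affine subspaces with a genericity argument that makes a single collection of $\codim I + 1$ geometrically regular projections recover the complex exactly. First I would invoke the structural input recorded just before the statement: since $I$ is prime of dimension $m = \dim I$, the tropical variety $\T(I)$ is a pure $m$-dimensional polyhedral complex, and there is a finite family $\mathcal{X} = \{\mathcal{X}_1,\ldots,\mathcal{X}_s\}$ of $m$-dimensional affine subspaces of $\R^n$ with $\T(I) \subseteq \bigcup_{i=1}^s \mathcal{X}_i$. We may also refine $\mathcal{X}$ so that $\T(I)$ is a subcomplex of the polyhedral complex obtained by intersecting the $\mathcal{X}_i$ among themselves and with the facets of $\T(I)$; this is harmless and only enlarges $s$.

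Next I would fix a common number $k := \codim I + 1 = n - m + 1$ of projections onto $\R^{m+1}$ and argue that a generic tuple $(\pi_0,\ldots,\pi_{\codim I})$ of such projections is simultaneously geometrically regular for this refined complex \emph{and} separates it in the strong sense needed. The inclusion $\T(I) \subseteq \bigcap_{i=0}^{\codim I} \pi_i^{-1}\pi_i(\T(I))$ is trivial, so the real content is the reverse inclusion. Suppose $w \notin \T(I)$. Because $\T(I)$ is closed and contained in $\bigcup_j \mathcal{X}_j$, the point $w$ either lies outside every $\mathcal{X}_j$ or lies on some $\mathcal{X}_j$ but outside the portion of $\mathcal{X}_j$ belonging to $\T(I)$. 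In the first case, for each $j$ the image $\pi_i(\mathcal{X}_j)$ is an affine subspace of $\R^{m+1}$ of the same dimension $m$ (by geometric regularity, condition (1)), hence a hyperplane, and a generic $\pi_i$ will place $\pi_i(w)$ off $\pi_i(\mathcal{X}_j)$ for all $j$ simultaneously, so $w \notin \pi_i^{-1}\pi_i(\T(I))$ already for a single $i$. In the second case one needs that $w$ is separated from the relevant faces of $\T(I)$ lying in $\mathcal{X}_j$; here condition (2) of geometric regularity, applied across the $\codim I + 1$ projections, is what forces a face containing $\pi_i(w)$ for every $i$ to actually contain $w$, contradicting $w \notin \T(I)$. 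This is exactly the mechanism of \cite[Thm.~4.4]{bg}, and I would cite that theorem and its proof for the quantitative statement that $n - m + 1$ generic geometrically regular projections suffice, feeding in the pure-dimensionality of $\T(I)$ to guarantee there are no lower-dimensional pieces requiring extra projections.

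The main obstacle, and the step deserving the most care, is verifying that one can choose the \emph{same} number $\codim I + 1$ of projections that works for \emph{all} the subspaces $\mathcal{X}_j$ and all incidences of the refined complex at once — i.e., that the relevant bad loci in the space of projection tuples form a proper closed subset, so a generic tuple avoids them. Since $\mathcal{X}$ is finite and each regularity/separation condition excludes only a finite union of hyperplanes (or lower-dimensional subvarieties) in the parameter space of a single $\pi_i$, a finite intersection of dense open conditions across the $\codim I + 1$ factors is again dense open; combined with the count $\dim \ker \pi_i = n - (m+1)$ from the corollary above, $\codim I + 1$ generic projections reduce the ambient $n$-dimensional space down through $\pi_i^{-1}\pi_i$ to the intersection $\T(I)$ of the right dimension. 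Assembling these observations gives the displayed equality and completes the proof.
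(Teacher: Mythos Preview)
Your proposal is correct and follows essentially the same route as the paper: the paper does not give a formal proof of this proposition but, in the paragraph immediately preceding it, sketches exactly your argument---invoke the finite covering of $\T(I)$ by $m$-dimensional affine subspaces, choose $n-m+1$ sufficiently generic geometrically regular projections, and cite \cite[Thm.~4.4]{bg} together with the pure-dimensionality of $\T(I)$ to conclude that the intersection of the inverse images recovers $\T(I)$. Your case analysis for the reverse inclusion is an elaboration the paper omits, but both accounts ultimately defer the quantitative step (that $\codim I + 1$ projections suffice) to the Bieri--Groves reference.
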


By considering algebraically regular projections,
and combining this proposition with Theorems~\ref{th:piinvpi}
(so far only proved for algebraically regular projections)
and~\ref{th:piinvpi2} yields Theorem~\ref{theo:tropbasis}.
Note that by Lemma~\ref{le:contained} the generators $g_i$
are actually contained in $I$.

Using this knowledge about the existence of some tropical basis,
we can also provide the proof of Theorem~\ref{th:piinvpi} 
for arbitrary rational projections.

\begin{theo}[Tropical Extension Theorem]
Let $I\lhd K[x_0,\ldots,x_n]$ be an ideal and $I_1=I\cap K[x_1,\ldots, x_n]$
be its first elimination ideal. 
For any $w\in \T(I_1)$ there exists a point 
$\tilde{w} = (w_0, \ldots, w_n) \in\RR^{n+1}$ with $w_i=\tilde{w}_i$ for $1\leq i\leq n$ and $\tilde{w}\in\T(I)$.
\end{theo}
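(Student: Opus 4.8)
\medskip
\noindent
The plan is to recast the statement as an assertion about extending valuations. Write $\pi : \R^{n+1} \to \R^n$ for the projection forgetting the first coordinate, so that the claim is precisely $\T(I_1) \subseteq \pi(\T(I))$ (the reverse inclusion being immediate, since every $f \in I_1 \subseteq I$ is independent of $x_0$). First I would reduce to the case that $I$ is prime: decomposing $\sqrt{I} = \bigcap_j Q_j$ into minimal primes, one has $\sqrt{I_1} = \sqrt{I} \cap K[x_1,\ldots,x_n] = \bigcap_j (Q_j \cap K[x_1,\ldots,x_n])$, hence $\T(I) = \bigcup_j \T(Q_j)$ and, since $\T$ depends only on the radical, $\T(I_1) = \bigcup_j \T(Q_j \cap K[x_1,\ldots,x_n])$. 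It therefore suffices to show $\T(Q \cap K[x_1,\ldots,x_n]) \subseteq \pi(\T(Q))$ for a prime ideal $Q$, noting that $Q \cap K[x_1,\ldots,x_n]$ is again prime.

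So assume $I$ is prime and put $A := K[x_0,\ldots,x_n]/I$ and $A_1 := K[x_1,\ldots,x_n]/I_1$. Since $I_1$ is exactly the kernel of the composition $K[x_1,\ldots,x_n] \hookrightarrow K[x_0,\ldots,x_n] \twoheadrightarrow A$, the ring $A_1$ embeds as a subring into the domain $A$. I would then use the identifications $\T(I) = \Delta_A^{\ord}$ and $\T(I_1) = \Delta_{A_1}^{\ord}$ from Section~\ref{se:prelim}. A point $w \in \T(I_1)$ is of the form $w = (v(x_1),\ldots,v(x_n))$ for some valuation $v : A_1 \to \R_\infty$ with $v|_K = \ord$. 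Extending $v$ first to $\Quot(A_1)$ (with unchanged value group) and then to $\Quot(A)$ yields a valuation $\tilde v$ of $A$ with $\tilde v|_{A_1} = v$, in particular $\tilde v|_K = \ord$. Then $\tilde w := (\tilde v(x_0), v(x_1),\ldots,v(x_n))$ lies in $\Delta_A^{\ord} = \T(I)$ and restricts to $w$, which is the required point. Here $\tilde v(x_0)$ is finite because $A$ is a domain in which the residue class of $x_0$ is nonzero, which holds precisely when $x_0 \notin I$.

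The only real obstacle is to guarantee that the extended valuation $\tilde v$ again takes values in $\R_\infty$, so that $\tilde w$ genuinely lies in $\R^{n+1}$ and not merely in a product of some larger ordered group. I would handle this by extending one generator at a time: choosing a transcendence basis of $\Quot(A)$ over $\Quot(A_1)$ and extending by the corresponding successive Gauss valuations leaves the value group unchanged, and across the remaining finite algebraic extension the value group can only grow by a finite index, so it stays inside the divisible hull of $v(\Quot(A_1)^\ast) \subseteq \R$, which is again an ordered subgroup of $\R$.

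Alternatively -- and closer in spirit to the classical extension theorem used in the proof of Theorem~\ref{th:piinvpi2} -- one can argue geometrically, still assuming $I$ prime. By the closure theorem the image of $\V(I)$ under the coordinate projection is Zariski dense in $\V(I_1)$ and, being constructible, contains a dense open subset $U$. Every point of $U$ lifts to $\V(I)$, so $\ord(U) \subseteq \pi(\T(I))$; moreover $\ord(U)$ is dense in $\T(I_1)$, because $\V(I_1) \setminus U$ is of strictly smaller dimension while $\T(I_1)$ is pure-dimensional by Bieri--Groves. Since $\pi$ is linear and $\T(I)$ is a finite polyhedral complex, $\pi(\T(I))$ is a finite union of polyhedra, hence closed, and $\T(I_1) = \overline{\ord(U)} \subseteq \pi(\T(I))$ follows.
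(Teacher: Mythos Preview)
Your argument is correct and is genuinely different from the paper's. The paper argues pointwise on $\ord(\V(I_1))$: it fixes a lexicographic Gr\"obner basis of $I$ with $x_0$ largest, and for a preimage $z\in\V(I_1)$ either lifts $z$ via the classical Extension Theorem (when some leading $x_0$-coefficient $h_i$ does not vanish at $z$), or, when all the $h_i$ vanish at $z$, takes a tropical basis $\{p_1,\dots,p_t\}$ of $I$ (whose existence was already secured for algebraically regular projections) and shows that for $w_0$ sufficiently negative the minimum in each $\trop(p_j)$ is attained twice at $(w_0,w)$; a separate limit argument then handles points of $\T(I_1)$ that are only in the closure of $\ord(\V(I_1))$.

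By contrast, you reduce to primes and then invoke the identification $\T(I)=\Delta_A^{\ord}$ directly: a point of $\T(I_1)$ \emph{is} a real valuation on $A_1$, and the theorem becomes the standard fact that a valuation on a subfield extends to the ambient field with value group still contained in $\R$. Your second, geometric variant (Chevalley plus pure-dimensionality of $\T(I_1)$ to show $\ord(U)$ is dense, together with closedness of $\pi(\T(I))$) is also sound. Both of your routes avoid Gr\"obner bases, the classical Extension Theorem, and any appeal to an already-constructed tropical basis, so they are logically lighter and more conceptual; the paper's argument, on the other hand, stays closer to the computational spirit of the article and makes visible how one actually finds the lifted coordinate~$w_0$. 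The one caveat you flag---that $\tilde v(x_0)$ is finite only when $x_0\notin I$---is a genuine edge case of the statement itself (e.g.\ $I=(x_0,\,x_1+x_2+1)$), and the paper's proof is equally silent about it; it is harmless in the intended application, where $I$ is prime and contains no monomial.
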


\begin{proof}
First let $w\in \ord(\V(I_1))$, so that there exists $z\in \V(I_1)$ with $\ord(z)=w$. Let $\mathcal{G}=\{g_1,\ldots,g_s\}$ be a reduced Gr\"obner basis of $I$ with respect to a lexicographical term order with $x_0>x_i,\;1\leq i\leq n$. I.e.,
\[g_i\ = \ h_i(x_1,\ldots,x_n)x_0^{\deg_{x_0}g_i}+\mbox{ terms of lower degree in }x_0 \, . \]
There are two cases to consider:
\begin{enumerate}
\item $z\notin \V(h_1,\ldots,h_s)$. Then by the classical Extension Theorem there is a root $\tilde{z}$ of $I$ which extends $z$, so $\ord(\tilde{z})=:\tilde{w}$ extends $w$.
\item $z\in \V(h_1,\ldots,h_s)$. Then $w=\ord(z)\in\T(h_1,\ldots,h_s)$. 
Let $\mathcal{P}=\{p_1,\ldots,p_t\}$ be a tropical basis of $I$.
 
Let $p_j$ be any of these polynomials. $p_j$ has the form
\[p_j \ = \ q_j(x_1,\ldots,x_n)x_0^{\deg_{x_0}p_j}+\mbox{ terms of lower degree in }x_0 \, . \]
Since $\mathcal{G}$ is a lexicographic Gr\"obner basis,
we have 
$q_j(x_1,\ldots,x_n)=: \sum k_\alpha x^\alpha $ $\in\langle h_1,\ldots,h_s\rangle$. 
Hence, the minimum  
\[\min_\alpha\{\ord(k_\alpha)+\alpha_1 x_1+\dots+\alpha_n x_n\}\]
is attained twice at $w$. We can pick a sufficiently small value 
$w_0^{(j)} \in \R$ so that all terms $x_1^{m_1}\cdots x_n^{m_n}x_0^{m_0}$ of $p_j$ with $m_0<\deg_{x_0}p_j$ have a larger value $m_1w_1+\cdots + m_nw_n+m_0w_0^{(j)}$. But then the minimum of all values of all terms of $p_j$ is attained at least twice; it is
\[\min_\alpha\{\ord(k_\alpha)+\alpha_1 x_1+\dots+\alpha_n x_n\}+\deg_{x_0}p_j\cdot w_0^{(j)} \, .\]
So $(w_0^{(j)},w_1,\ldots,w_n)\in\T(h_j)$.
 
By setting $w_0=\min_j\{w_0^{(j)}\}$ and $\tilde{w} := (w_0,\ldots,w_n) \in\T(I)$,
we obtain the desired extension of $w$.
\end{enumerate}
Let now $w=\lim_{i\rightarrow\infty}w^{(i)}$ be in the closure of $\ord(\V(I_1))$. Then there exist $\tilde{w}^{(i)}\in \T(I)$ with $\tilde{w}_j^{(i)}=w_j^{(i)}$ for $1\leq j\leq n$. Let $\mathcal{P}=\{p_1,\ldots,p_t\}$ be again a tropical basis of $I$. Then we can assume w.l.og. that the minimum of $\trop(p_k),\;1\leq k\leq t$ for $\tilde{w}^{(i)}$ is attained at the same terms. This gives us conditions for the $\tilde{w}_0^{(i)}$:
\[k^{(i)} \leq \tilde{w}_0^{(i)}\leq l^{(i)} \quad \mbox{(one of them can be $\pm\infty$)} \, . \]
These bounds vary continuously with $w^{(i)}$. So we can choose $\tilde{w}_0$ arbitrarily in $[\lim k^{(i)},\lim l^{(i)}]$ (only one of the limites can be $\pm\infty$).
\end{proof}

\section{The Newton polytopes for the linear case\label{se:newtonpolytopes}}

As mentioned earlier, an ideal generated by linear forms
may not have a small tropical basis if we restrict the basis to
consist of linear forms. Using our results from Section~\ref{se:projections},
we can provide a short basis at the price of increased degrees.
A~natural question is to provide a good characterization for the
Newton polytopes of the resulting basis polynomials. Here, 
we briefly discuss the special case of a prime ideal $I$ generated by
two linear polynomials 
$F = \sum_{i=1}^n a_i x_i + a_{n+1}$,
$G = \sum_{i=1}^n b_i x_i + b_{n+1} \in K[x_1, \ldots, x_n]$.

In order to characterize the Newton polytope of the
additional polynomials in the tropical basis, we consider
the resultant of the polynomials $f,g$
\[f \ = \ a_1x_1\lambda^{v_1}+\dots +a_nx_n\lambda^{v_n}+a_{n+1} \, ,\]
\[g \ = \ b_1x_1\lambda^{v_1}+\dots +b_nx_n\lambda^{v_n}+b_{n+1}\]
in $K[x_1, \ldots, x_n, \lambda]$.
Assume that the components $v_i$ are distinct.
Then w.l.o.g. we can assume $v_1 > v_2> \cdots > v_n > v_{n+1}:=0$.

In order to apply the results of Gelfand, Kapranov and Zelevinsky \cite{gkz} regarding the Newton polytope of the resultant,
we consider the representation
\[\Res_\lambda(f,g) \ = \ \sum_{p,q}c_{p,q}a^pb^qx^{p+q}\] 
with $p=(p_1,\ldots,p_{n+1}),q=(q_1,\ldots,q_{n+1})\in\ZZ_+^{n+1}$.
The Newton polytope is contained in the set
$\mathcal{Q}_n\subset\ZZ^{2n+2}$ of nonnegative integer points
$(p,q)$ with
\begin{enumerate}
\item $\sum\limits_{i=1}^{n+1} p_i = \sum\limits_{j=1}^{n+1} q_j \ = \ v_1 \, ,$
\item $\sum\limits_{i=1}^{n+1} v_ip_i+\sum\limits_{j=1}^{n+1} v_jq_j \ = \ v_1^2 \, ,$
\item $\sum\limits_{\atopfrac{1\leq k\leq n}{0\leq v_1-v_k\leq i}}(i-v_1+v_k)p_k+\sum\limits_{\atopfrac{1\leq l\leq n}{0\leq v_1-v_l \leq j}}(j-v_1+v_l)q_l \ \geq \ ij \quad (0\leq i,j\leq v_1) \, .$
\end{enumerate}
Hence, we can conclude:

\begin{kor}
The set of integer points in the Newton polytope $\new(\Res_\lambda(f,g))$ $\subset\ZZ^n$ is contained in the image of $\mathcal{Q}_n$ under the mapping
\[
  (p_1,\ldots, p_{n+1},q_1,\ldots,q_{n+1}) \ \mapsto \ (p_1+q_1,\ldots,p_n+q_n) \, .
\]
\end{kor}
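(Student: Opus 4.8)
The statement we want is that the lattice points of $\new(\Res_\lambda(f,g)) \subset \ZZ^n$ all lie in the image of $\mathcal{Q}_n$ under the coordinate-summing map $(p,q) \mapsto (p_1+q_1,\ldots,p_n+q_n)$. The key observation is that we have already (in the preceding displayed paragraph) described the Newton polytope of $\Res_\lambda(f,g)$ as a polynomial \emph{in the coefficients} $a_1,\ldots,a_{n+1},b_1,\ldots,b_{n+1}$: its support is contained in $\mathcal{Q}_n \subseteq \ZZ^{2n+2}$, via the representation $\Res_\lambda(f,g) = \sum_{p,q} c_{p,q} a^p b^q x^{p+q}$. So the proof is essentially a bookkeeping argument: track what the substitution $a_i \mapsto a_i$, $b_j \mapsto b_j$ followed by recording only the exponent of $x = (x_1,\ldots,x_n)$ does to the support.

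\textbf{Key steps.} First I would invoke the GKZ description of $\new(\Res_\lambda(f,g))$ as a polytope in the coefficient space $\ZZ^{2n+2}$ recorded by the conditions (1)--(3) above; this is exactly the content of \cite{gkz} applied to the pair $f,g$, which are generic (up to the distinctness hypothesis on the $v_i$) with the prescribed $\lambda$-Newton polytopes. Concretely, each monomial $c_{p,q} a^p b^q x^{p+q}$ that actually occurs has $(p,q) \in \mathcal{Q}_n$. Second, I would observe that in this representation the exponent of the tuple $x = (x_1,\ldots,x_n)$ attached to the term indexed by $(p,q)$ is precisely $(p_1+q_1,\ldots,p_n+q_n)$ — the last coordinates $p_{n+1},q_{n+1}$ do not contribute, since $x_{n+1}$ is not a variable (it corresponds to the constant terms $a_{n+1},b_{n+1}$). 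Third, since $\new(\Res_\lambda(f,g)) \subset \ZZ^n$ is by definition the convex hull of the $x$-exponents of the monomials actually appearing, every lattice point of it that is an actual exponent is of the form $(p_1+q_1,\ldots,p_n+q_n)$ for some $(p,q)\in\mathcal{Q}_n$, hence lies in the image of $\mathcal{Q}_n$ under the stated map. (If one wants \emph{all} lattice points of the convex hull, one notes the image of $\mathcal{Q}_n$ is itself the set of lattice points of a polytope — the projection of the polytope $\conv(\mathcal{Q}_n)$ — but since $\mathcal{Q}_n$ is defined as a set of \emph{integer} points the intended reading is the weaker one: the monomials' exponents land there.)

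\textbf{Main obstacle.} The substantive work is not in this corollary at all but in justifying the GKZ description — i.e., verifying that $f$ and $g$, regarded as polynomials in $\lambda$ with coefficients depending on $x_i, a_i, b_i$, fall within the scope of the sparse resultant theory of \cite{gkz} so that conditions (1)--(3) genuinely cut out (a polytope containing) $\new$. The distinctness assumption $v_1 > \cdots > v_n > 0$ is what makes the two supports in $\lambda$ equal to $\{0, v_n, \ldots, v_1\}$ and puts us in the classical Sylvester/mixed-resultant setting; the inequalities in (3) are the facet description of the resultant polytope (the secondary-polytope / Minkowski-type characterization) specialized to this support. Once that is granted, the present corollary is purely the trivial remark that forgetting the $(n+1)$-st coordinates of $p$ and $q$ and summing the first $n$ of each recovers the $x$-exponent. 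So in the write-up I would spend one sentence on the reduction and then simply read off the map; the only thing to be careful about is not to over-claim (the containment is one-directional, and it is in terms of $\mathcal{Q}_n$ as an explicit superset, not an equality with $\new$).

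\begin{proof}
By the representation $\Res_\lambda(f,g) = \sum_{p,q} c_{p,q}\, a^p b^q x^{p+q}$ and the preceding discussion, every monomial occurring in $\Res_\lambda(f,g)$ is indexed by a pair $(p,q) \in \mathcal{Q}_n$, where $p = (p_1,\ldots,p_{n+1})$, $q=(q_1,\ldots,q_{n+1})$. In such a monomial the exponent of $x = (x_1,\ldots,x_n)$ is $(p_1+q_1,\ldots,p_n+q_n)$, the coordinates $p_{n+1}, q_{n+1}$ being attached to the constant terms $a_{n+1}, b_{n+1}$ rather than to any variable. Since $\new(\Res_\lambda(f,g)) \subset \ZZ^n$ is the convex hull of the exponent vectors of the monomials that actually appear, each such exponent vector is of the form $(p_1+q_1,\ldots,p_n+q_n)$ for some $(p,q) \in \mathcal{Q}_n$, hence lies in the image of $\mathcal{Q}_n$ under the map $(p_1,\ldots,p_{n+1},q_1,\ldots,q_{n+1}) \mapsto (p_1+q_1,\ldots,p_n+q_n)$. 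This is the assertion.
\end{proof}
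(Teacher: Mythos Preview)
Your proposal is correct and matches the paper's approach exactly: the paper gives no explicit proof, simply writing ``Hence, we can conclude'' after the GKZ description and the representation $\Res_\lambda(f,g)=\sum_{p,q}c_{p,q}a^pb^qx^{p+q}$, so the argument is precisely the bookkeeping you spell out. Your parenthetical caveat about support points versus all lattice points of the convex hull is well taken---the paper does not address it either, and the one-line ``Hence'' really only justifies the containment for the actual $x$-exponents.
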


\begin{bsp}
{\rm
Let $I=\langle 2x+y-4, x+2y+z-1\rangle$ and 
$\ord(\cdot)$ be the 2-adic valuation (see Figure~\ref{fi:trop1}
for a figure of $\mathcal{T}(I)$).
Actually, the first projection
can be chosen arbitrarily (even geometrically non-regular).
We choose a projection $\pi_1$ whose kernel is generated by $(0,0,1)$.
 Then the tropical hypersurface $\pi_1^{-1} \pi_1 (\T(I))$ satisfies
$\pi_1^{-1}\pi_1(\T(I)) = \mathcal{T}(2x+y-4)$, and the 
Newton polytope of that polynomial is a triangle (so the projection is
geometrically non-regular).
By choosing $\pi_2$ and $\pi_3$ with kernels generated by $(1,2,0)$ and
$(1,0,1)$, respectively, we obtain the polynomials
$6x^2+6x^2z+49y+14yz+yz^2$ 
and $3xy+2x-yz+4z$. Both Newton polytopes are quadrangles.
 
Adding these three nonlinear polynomials to 
the basis of $I$ yields a tropical basis.
}
\end{bsp}

\begin{figure}[h!]
\begin{center}
\includegraphics[scale=0.4]{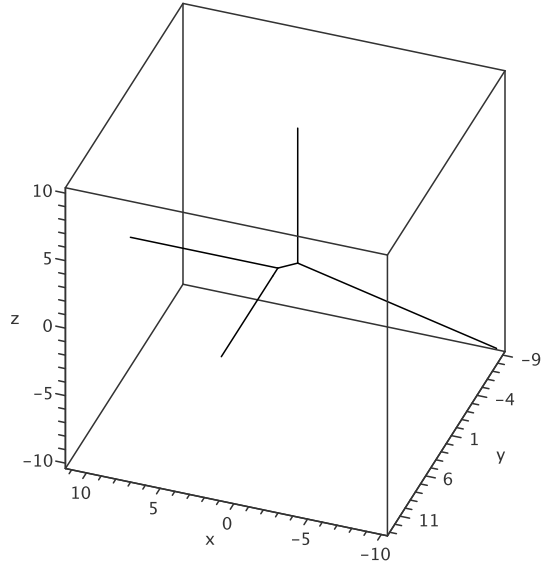} 
\caption{Tropical line $\T(I)$ in 3-space}
\end{center}
\label{fi:trop1}
\end{figure}

\bibliographystyle{amsplain}
\bibliography{Literatur}

\end{document}